\newtheorem{prelemmaa}{{\bf LEMMA}}
\newtheorem{prelem}{{\bf THEOREM}}
\newtheorem{preque}{{\bf QUESTION}}
\newtheorem{theorem}{Theorem}
\newtheorem{prelemma}{Lemma}
\newtheorem{preproof}{{\bf Proof:}}
\newenvironment{proof}[1]{\begin{preproof}{\rm
               #1}\hfill{\rule[-0.5mm]{2mm}{2mm}}}{\end{preproof}}
\newtheorem{preproposition}{{PROPOSITION}}
\newtheorem{preremark}{REMARK}
\newtheorem{precorollary}{{COROLLARY}}
\newtheorem{precorn}{{COROLLARY}}
\newtheorem{predefinition}{DEFINITION}
\newtheorem{preexample}{EXAMPLE}
\def\newpic#1{}
\date{}
\title{\bf  On total dominating sets in graphs}
\author{
{\sc Maryam Atapour and Nasrin Soltankhah }\footnote{Corresponding
author: E-mail: soltan@alzahra.ac.ir, \hspace{1.5mm}
soltankhah.n@gmail.com.}
\\
 [6mm]
Department of Mathematics\\ Alzahra University  \\
Vanak Square 19834 \ Tehran, I.R. Iran }
\begin{document}
\maketitle
\begin{abstract}
  A set $S$ of vertices in a graph $G(V,E)$ is called a dominating
 set if every vertex $v\in V$ is either an element of $S$ or is
 adjacent to an element of $S$. A set $S$ of vertices in a graph $G(V,E)$ is called
 a total dominating set if every vertex $v\in V$ is adjacent to an element of $S$. The
domination number of a graph $G$ denoted by $\gamma(G)$ is the
minimum cardinality of a dominating set in $G$. Respectively the
total domination number of a graph $G$ denoted by $\gamma_t(G)$ is
the minimum cardinality of a total dominating set in $G$. An
upper bound for $\gamma_t(G)$ which has been achieved by Cockayne
and et al. in $\cite{coc}$
 is: for any graph $G$ with
no isolated vertex and maximum degree
$\Delta(G)$ and $n$ vertices, $\gamma_t(G)\leq n-\Delta(G)+1$.\\
Here we characterize  bipartite graphs and trees which achieve
this upper bound. Further we present some another upper and lower
bounds for $\gamma_t(G)$. Also,  for circular complete graphs, we
determine the value of $\gamma_t(G)$.
\vspace{3mm}\\
{\bf 2000 Mathematics Subject Classification: 05c69}

 \hspace*{-7mm} {\bf Keywords:} {\sf  total
dominating set, total domination number}
\end{abstract}
\section{Introduction}
Let $G(V,E)$ be a graph. For any vertex $x \in V$, we define the
neighborhood  of $x$, denoted by $N(x)$, as the set of all
vertices adjacent to $x$. The closed neighborhood of $x$, denoted
by $N[x]$, is the set $N(x)\cup \{x\}$. For a set of vertices
$S$, we define $N(S)$ as the union of $N(x)$ for all $x\in S$,
and $N[S]=N(S)\cup S$. The degree of a vertex is the size of its
neighborhoods. The maximum degree of a graph $G$ is denoted by
$\Delta(G)$ and the minimum degree is denoted by $\delta(G)$.
Here $n$ will denote the number of vertices of a graph $G$. A set
$S$ of vertices in a graph $G(V,E)$ is called a dominating
 set if every vertex $v\in V$ is either an element of $S$ or is
 adjacent to an element of $S$. A set $S$ of vertices in a graph $G(V,E)$ is called
 a total dominating set if every vertex $v\in V$ is adjacent to an element of $S$. The
domination number of a graph $G$ denoted by $\gamma(G)$ is the
minimum cardinality of a dominating set in $G$. Respectively the
total domination number of a graph $G$ denoted by $\gamma_t(G)$ is
the minimum cardinality of a total dominating set in $G$. clearly
$\gamma(G)\leq \gamma_t(G)$, also it has been proved that
$\gamma_t(G)\leq 2\gamma(G)$.
\\ An upper bound for $\gamma_t(G)$ has been achieved by Cockayne and et
al. in $\cite{coc}$ in the following theorems:
\begin{prelem}\label{t1}
If a graph $G$ has no isolated vertices, then
 $\gamma_t(G)\leq n-\Delta(G)+1$.
\end{prelem}
\begin{prelem}
If $G$ is a connected graph and $\Delta(G)< n-1$, then
$\gamma_t(G)\leq n-\Delta(G)$
\end{prelem}

As a result of the above theorems, if $G$ is a graph with
$\gamma_t(G)=n-\Delta(G)+1$, then $\Delta(G)\geq n -1$. Hence, if
$G$ is a $k$-- regular graph and $\gamma_t(G)=n-k+1$, then $G$ is
$K_n$. As a result of the above theorems, if $G$ is a graph with
$\gamma_t(G)=n-\Delta(G)+1$, then $\Delta(G)\geq n -1$. Hence, if
$G$ is a $k$-- regular graph and $\gamma_t(G)=n-k+1$, then $G$ is
$K_n$. Total domination and  upper bounds on the total domination
number in graphs were intensively investigated, see e. g. (
~\cite{ha}, ~\cite{lw}).
\\
Here we characterize  bipartite graphs and trees which achieve
the upper bound in Theorem~\ref{t1}. Further we present some
another upper and lower bounds for $\gamma_t(G)$. Also, for
circular complete graphs, we determine the value of $\gamma_t(G)$.

It is easy to prove that for $n\geq 3$,
$\gamma_t(C_n)=\gamma_t(P_n)=\frac{n}{2}$  if $n\equiv 0 \pmod 4$
and $\gamma_t(C_n)=\gamma_t(P_n)=\lfloor\frac{n}{2}\rfloor +1 $
otherwise.

for the definitions and notations not defined here we refer
the reader to texts, such as \cite{HAY}.
\section{Other bounds for $\gamma_t(G)$}
In this section we introduce some other upper bounds for
$\gamma_t(G)$.
\begin{theorem}
Let $G$ be a connected graph, then $\gamma_t(G) \geq \lceil
\frac{n}{\Delta(G)}\rceil$.
\end{theorem}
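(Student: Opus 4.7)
The plan is a straightforward double-counting argument on the edges between a minimum total dominating set and $V$. Let $S$ be a total dominating set of $G$ with $|S|=\gamma_t(G)$. By definition of total domination, every vertex $v\in V$ must have at least one neighbor in $S$, i.e.\ $N(S)=V$.

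Next, I would bound $|N(S)|$ from above by summing degrees. Since each $s\in S$ contributes at most $|N(s)|\le \Delta(G)$ vertices to $N(S)$, we obtain
\[
 n \;=\; |V| \;=\; |N(S)| \;\le\; \sum_{s\in S}|N(s)| \;\le\; |S|\,\Delta(G) \;=\; \gamma_t(G)\,\Delta(G).
\]
Dividing by $\Delta(G)$ gives $\gamma_t(G)\ge n/\Delta(G)$, and because $\gamma_t(G)$ is an integer, we may take the ceiling on the right-hand side to conclude $\gamma_t(G)\ge \lceil n/\Delta(G)\rceil$.

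There is essentially no obstacle here; the only subtlety is to note that connectedness guarantees $\delta(G)\ge 1$, so $\gamma_t(G)$ is well-defined (no isolated vertices), and that $\Delta(G)\ge 1$, so dividing is legal. The bound is in general not tight (the sum $\sum_{s\in S}|N(s)|$ overcounts vertices dominated by more than one element of $S$, and also counts vertices inside $S$ only when they have neighbors in $S$), but for the stated inequality the crude estimate suffices.
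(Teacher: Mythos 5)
Your proof is correct and uses essentially the same counting argument as the paper: both bound $n$ by $\sum_{s\in S}|N(s)|\le|S|\Delta(G)$, the paper merely phrasing it as each vertex of $S$ covering at most $\Delta(G)-1$ vertices of $V-S$ plus at least one vertex of $S$. Your version via $N(S)=V$ is marginally cleaner but not a different route.
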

\begin{proof}{
Let $S\subseteq V(G)$ be a total dominating set in $G$. Every
vertex in $S$ dominates at most  $\Delta(G)-1$  vertices of
$V(G)-S$ and dominate at least one of the vertices in $ S$.
Hence, $|S|(\Delta(G)-1)+|S|\geq n$. Since, $S$ is an arbitrary
total dominating set, then $\gamma_t(G) \geq\lceil
\frac{n}{\Delta(G)}\rceil$. }
\end{proof}
If $G=K_n$,  $G=C_{4n}$, or $G=P_{4n}$ then
$\gamma_t(G)=\lceil\frac {n}{\Delta(G)}\rceil$. so the above
bound is sharp.
\
\begin{theorem}
Let $G$ be a graph with diam$(G)=2$ then,  $\gamma_t(G) \leq
\delta(G)+1$.
\end{theorem}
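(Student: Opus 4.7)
The plan is to produce an explicit total dominating set of size $\delta(G)+1$ by taking the closed neighborhood of a vertex realizing the minimum degree. Since $\text{diam}(G)=2$ forces $G$ to be connected, we have $\delta(G)\geq 1$, so such a vertex has at least one neighbor and the construction is nonempty.

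Concretely, I would let $v$ be a vertex with $\deg(v)=\delta(G)$, set $S=\{v\}\cup N(v)$, and observe that $|S|=\delta(G)+1$. I would then verify the total domination property by checking three kinds of vertices:
\begin{enumerate}
\item The vertex $v$ itself is adjacent to every element of $N(v)\subseteq S$, and $N(v)\neq\emptyset$.
\item Each $u\in N(v)$ is adjacent to $v\in S$.
\item For $w\in V(G)\setminus S$, the diameter-two hypothesis gives $d(v,w)\leq 2$; since $w\notin N(v)$ we must have $d(v,w)=2$, so there exists $u\in V(G)$ with $vu,uw\in E(G)$, and this $u$ lies in $N(v)\subseteq S$.
\end{enumerate}
Thus every vertex of $G$ has a neighbor in $S$, so $S$ is a total dominating set, and $\gamma_t(G)\leq |S|=\delta(G)+1$.

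There is no real obstacle here: the argument is just the observation that closed neighborhoods of minimum-degree vertices in diameter-two graphs automatically totally dominate. The only point that deserves attention is the nondegeneracy check $N(v)\neq\emptyset$, which is handled by noting that $\text{diam}(G)=2$ implies connectedness (and $n\geq 2$), hence $\delta(G)\geq 1$.
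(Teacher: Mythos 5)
Your proposal is correct and follows essentially the same route as the paper: take a minimum-degree vertex $x$, observe that diam$(G)=2$ makes $N(x)$ a dominating set, and conclude that $N(x)\cup\{x\}$ is a total dominating set of size $\delta(G)+1$. Your version merely spells out the case analysis and the nondegeneracy check $N(v)\neq\emptyset$ that the paper leaves implicit.
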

\begin{proof}{
Let $x\in V(G)$ and deg$(x)=\delta(G)$. Since,  diam$(G)=2$, then
$N(x)$ is a dominating set for $G$.\\
 Now $S=N(x)\cup \{x\}$ is a
total dominating set for $G$ and $|S|=\delta(G)+1$. Hence,
$\gamma_t(G) \leq \delta(G)+1$.}
\end{proof}
As we know, $\gamma_t(C_5)= 3$ and also $\delta(C_5)= 2$,
$diam(C_5)= 2$ then $\gamma_t(C_5)= \delta(C_5)+1$. Hence, the
above bound is sharp.
\\
 \begin{theorem}
If $G$ is a connected graph with the girth of length $g(G)\geq 5$
and $\delta(G)\geq 2$, then $\gamma_t(G) \leq n-
\lceil\frac{g(G)}{2}\rceil +1$.
\end{theorem}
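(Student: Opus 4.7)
The plan is to exhibit a total dominating set of the form $S = V(G) \setminus X$ for a carefully chosen subset $X$ of a shortest cycle of $G$. Let $C : v_1 v_2 \cdots v_g v_1$ be a shortest cycle, so $g = g(G) \geq 5$. Two structural consequences of the girth assumption are: $C$ has no chord (else a shorter cycle would exist), and any vertex $u \notin V(C)$ has at most one neighbor on $C$, since two neighbors $v_i, v_j$ of $u$ on $C$ together with the shorter $v_i v_j$-arc would form a cycle of length at most $2 + \lfloor g/2 \rfloor < g$. Combined with $\delta(G) \geq 2$, this shows that every vertex outside $C$ has at least one neighbor also outside $C$.

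I would then impose on $X \subseteq V(C)$ the condition that no two vertices of $X$ lie at cyclic distance exactly $2$ on $C$. Under this condition, $S := V(G) \setminus X$ is a total dominating set: any $u \notin V(C)$ has a neighbor in $V(G) \setminus V(C) \subseteq S$, and for any $v_i \in V(C)$ at least one of $v_{i-1}, v_{i+1}$ lies in $S$ (otherwise both would be in $X$, yet they are at cyclic distance $2$). It thus suffices to produce $X$ of size at least $\lceil g/2 \rceil - 1$ satisfying the distance condition, which will give $|S| \leq n - \lceil g/2 \rceil + 1$.

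To build such an $X$ I would use the pattern of two consecutive $X$-vertices followed by a gap of two, wrapping around appropriately. Writing $g = 4m + r$ with $r \in \{0,1,2,3\}$, put $\{v_{4j+1}, v_{4j+2}\}$ into $X$ for each $0 \leq j \leq m-1$, and when $r = 3$ also include $v_{4m+1}$. A short case check on $r$ confirms the cyclic-distance-$2$ condition (the only nontrivial distances arise at the wraparound) and that $|X| \geq \lceil g/2 \rceil - 1$ in every residue class. This combinatorial step is the main obstacle; an equivalent and perhaps cleaner formulation is that $X$ should be a large independent set in the graph on $V(C)$ whose edges are the cyclic-distance-$2$ pairs, a disjoint union of cycles whose independence number is straightforward to compute. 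All other steps follow directly from $g \geq 5$ and $\delta(G) \geq 2$.
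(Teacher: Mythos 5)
Your proposal is correct and is essentially the paper's argument: both take a shortest cycle $C$, use $g\geq 5$ to show every vertex off $C$ has at most one neighbor on $C$ (hence, by $\delta\geq 2$, a neighbor off $C$), and then take all of $V(G)\setminus V(C)$ together with a suitable subset of $V(C)$. Your cyclic-distance-$2$ condition on $X$ is exactly the statement that $V(C)\setminus X$ is a total dominating set of the cycle $C$, so your explicit pattern merely re-derives the bound $\gamma_t(C_g)\leq \lfloor g/2\rfloor+1$ that the paper simply quotes.
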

\begin{proof}{
Let $G$ be a connected graph with  $g(G)\geq 5$ and let $C$ be a
cycle of length $g(G)$. Remove $C$ from $G$ to form a graph
$G^\prime$. Suppose an arbitrary vertex $v\in V(G^\prime)$, since
$ \delta(G)\geq 2$, then $v$ has at least two neighbors say $x$
and $y$. Let $x,y \in C$. If $d(x,y)\geq 3$, then replacing the
path from $x$ to $y$ on $C$ with the path $x,v,y$ reduces the
girth of $G$, a contradiction. If $d(x,y)\leq 2$, then $x,y,v$
are on either $C_3$ or $C_4$ in $G$, contradicting the hypothesis
that $g(G)\geq 5$.
 Hence, no vertex in
$G^\prime$ has two or more neighbors on $C$. Since $\delta(G)\geq
2$, the graph $G^\prime$ has minimum degree at least
$\delta(G)-1\geq 1$. Then $G^\prime$ has no isolated vertex. Now
let $S^\prime$ be a $\gamma_t $--set for $C$. Then
$S=S^\prime\cup V(G^\prime)$ is  a total dominating set for $G$.
Hence,  $\gamma_t(G) \leq n-\lceil\frac {g(G)}{2}\rceil +1$(note
that $\gamma_t(C) \leq \lfloor\frac {g(G)}{2}\rfloor +1$) .}
\end{proof}
\section {Bipartite graphs with $\gamma_t(G)=n-\Delta(G)+1$}
In this section we charactrize the bipartite graphs achieving the upper bound in the theorem A.
\begin{theorem}
Let $G$ be a bipartite graph with no isolated vertices. Then
$\gamma_t(G)=n-\Delta(G)+1$ if and only if $G$ is a graph in form
of $K_{1,t} \bigcup rK_2$ for $r\geq 0$.
\end{theorem}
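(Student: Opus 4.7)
For the ``if'' direction I would simply compute both sides. For $G=K_{1,t}\cup rK_2$ with $t\ge 1$ we have $n=t+1+2r$ and $\Delta(G)=t$. Any total dominating set must contain the center of the star (to dominate the leaves) together with at least one leaf (to dominate the center), and must contain \emph{both} endpoints of every $K_2$ component (each vertex's only neighbour is its partner). Hence $\gamma_t(G)=2+2r$, which equals $n-\Delta(G)+1$.

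For the ``only if'' direction my plan is a component-by-component analysis. Let $G_1,\dots,G_k$ be the connected components, ordered so that $\Delta(G_1)=\Delta(G)=\Delta$; since $G$ has no isolated vertex, each $G_i$ has at least two vertices and $\Delta(G_i)\ge 1$. Applying Theorem~A to every component and summing gives
\[
\gamma_t(G)=\sum_{i=1}^k \gamma_t(G_i)\le \sum_{i=1}^k\bigl(n_i-\Delta(G_i)+1\bigr)=n-\sum_{i=1}^k\Delta(G_i)+k.
\]
The hypothesis $\gamma_t(G)=n-\Delta+1$ therefore forces $\sum_{i\ge 2}\Delta(G_i)\le k-1$. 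Combined with $\Delta(G_i)\ge 1$ for every $i$ this is an equality, which has two consequences I would extract: first, $\Delta(G_i)=1$ for every $i\ge 2$, so (being connected) $G_i=K_2$; second, equality must have held at the component level for $G_1$ as well, i.e.\ $\gamma_t(G_1)=n_1-\Delta(G_1)+1$.

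It remains to identify $G_1$. Here I would invoke the contrapositive of Theorem~B: if $G_1$ is connected and $\gamma_t(G_1)=n_1-\Delta(G_1)+1$, then $\Delta(G_1)\ge n_1-1$, so $\Delta(G_1)=n_1-1$. Now the bipartite hypothesis enters: in a bipartite graph a vertex $v$ of degree $n_1-1$ exhausts one side of the bipartition by itself, so the remaining $n_1-1$ vertices all lie in the other (independent) part, and $G_1=K_{1,n_1-1}=K_{1,t}$ with $t=\Delta$. Setting $r=k-1$ yields $G=K_{1,t}\cup rK_2$ as required. The only real pinch-point is the component inequality: once one sees that $\sum_{i\ge 2}\Delta(G_i)\ge k-1$ is already forced from below by ``no isolated vertex,'' the equality case pins down every $G_i$ for $i\ge 2$ and reduces the problem to the connected bipartite case, where Theorem~B finishes it immediately; the bipartiteness is used only at this last step to rule out denser connected graphs of diameter two.
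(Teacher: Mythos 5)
Your proposal is correct, and it takes a genuinely different route from the paper. The paper argues directly on the bipartition $A\cup B$: it fixes a vertex $x\in A$ of maximum degree and runs four ``stages,'' in each one exhibiting an explicit total dominating set of size $n-\Delta(G)$ to force a contradiction, thereby showing that every $y\in A\setminus\{x\}$ has exactly one neighbour, that this neighbour lies outside $N(x)$, and that distinct vertices of $A\setminus\{x\}$ have disjoint neighbourhoods --- which pins down $K_{1,t}\cup rK_2$ by hand. You instead treat Theorems~A and~B as black boxes: summing Theorem~A over components and squeezing against the lower bound $\Delta(G_i)\ge 1$ forces every non-maximal component to be $K_2$ and forces equality $\gamma_t(G_1)=n_1-\Delta(G_1)+1$ on the remaining connected component, where the contrapositive of Theorem~B gives a universal vertex and bipartiteness then collapses $G_1$ to a star. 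Your inequality chain is tight in both directions (the upper bound $\sum_{i\ge2}\Delta(G_i)\le k-1$ meets the lower bound $k-1$, so both the component-level and the summed inequalities are equalities), so the equality-case extraction is legitimate. Your argument is shorter, avoids the somewhat delicate counting of the paper's Stage~2, and makes transparent that bipartiteness enters only to convert ``has a dominating vertex'' into ``is a star''; the paper's argument has the virtue of being self-contained modulo Theorem~A alone, never invoking Theorem~B. One small point worth making explicit when you write this up: the reduction $\gamma_t(G)=\sum_i\gamma_t(G_i)$ needs the observation that a total dominating set meets each component in a total dominating set of that component, which is valid precisely because $G$ has no isolated vertices.
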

\begin{proof}{
If $G$ is $K_{1,t} \cup rK_2 (r\geq 0)$, clearly
$\gamma_t(G)=n-\Delta(G)+1$. Now let $G$ be a bipartite graph with
partitions $A\bigcup B$ and $x\in A$ where ${\rm
deg}(x)=\Delta(G)=t$. We continue our proof in four stages:
 \\
{\textbf Stage 1:} We claim that for every vertex $y\in A-\{ x\}$,
$ N(y)-N(x)\neq \emptyset$. If it is not true, there exists a
vertex in $A-\{ x\}$, say $y$, such that  $N(y)\subseteq N(x)$. So
let $u\in N(y)$, the set
 $S=V-(N(x)\cup \{y\})\bigcup \{ u\}$ is a total dominating set and $|S|=n-\Delta(G)$, a
contradiction.
 So we have $n\geq 2|A|+\Delta(G)-1$.\\
 {\textbf Stage 2:} For every
vertex $y\in A$, let $u_y\in N(y)$. Clearly the set
$S=A\cup(\cup_{y\in A} \{ u_y\} )$ is a total dominating set for
$G$ and $|S|\leq 2|A|$, so $\gamma_t(G)\leq 2|A|$. Now let $y\in
A-\{ x\}$ such that $|N(y)-N(x)|\geq2$. Hence, we have:
 \begin{center}
 $n\geq 2|A|+\Delta(G)$
 \end{center}
  \begin{center}
  $\Rightarrow$
 $\gamma_t(G)+\Delta(G)-1\geq 2|A|+\Delta(G)$
 \end{center}
 \begin{center}
 $\Rightarrow$
 $\gamma_t(G)\geq 2|A|+1$,
 \end{center}
  a contradiction. Hence, for every
 vertex $y\in A-\{ x\}$, $|N(y)-N(x)|=1$.\\
 {\textbf Stage 3:} Let $y\in A-\{x\}$ and
 $N(y)\cap N(x)\neq \emptyset$. Let
 $u\in N(y)\cap N(x)$. Now, $S=(V-N(x)\cup \{ y\})\cup \{u\}$ is a
 total dominating set and $|S|=n-\Delta(G)$.
 So, $\gamma_t(G)\leq n-\Delta(G)$, a contradiction.\\
  {\textbf Stage 4:} Let $y,z\in A-\{ x\}$ and
 $N(y)\cap N(z)\neq \emptyset $. Now
 $S=(V-(\{ z\}\cup N(x)))\cup\{ u\}$, where $u\in N(x)$, is a total
 dominating set and $|S|=n-\Delta(G)$.  So, $\gamma_t(G)\leq n-\Delta(G)$, a contradiction. Hence,
  $G$ is a graph in form of $K_{1,t}\cup rK_2 $.}
 \end{proof}
 \begin{precorn}
 Let $T$ is a Tree. Then $\gamma_t(T)=n-\Delta(T)+1$ if and only
 if $T$ is a star.
 \end{precorn}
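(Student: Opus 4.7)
The plan is to derive this directly from the bipartite characterization theorem just proved, which is the only real work needed. Every tree $T$ (with at least two vertices) is a connected bipartite graph with no isolated vertices, so the theorem applies and tells us that $\gamma_t(T)=n-\Delta(T)+1$ forces $T$ to have the form $K_{1,t}\cup rK_2$ for some $r\geq 0$.

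Next I would exploit connectedness. A disjoint union $K_{1,t}\cup rK_2$ is connected if and only if $r=0$, so the only tree among these graphs is the star $K_{1,t}$ itself. This handles the forward direction with essentially no further effort.

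For the converse, I would verify the numerical identity directly for $T=K_{1,t}$. Here $n=t+1$ and $\Delta(T)=t$, and a minimum total dominating set must contain the center (to dominate the leaves) together with any one leaf (to dominate the center), giving $\gamma_t(T)=2=n-\Delta(T)+1$. This equality can of course also be read off from the previous theorem.

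There is no real obstacle here, since the corollary is essentially a one-line specialization of the preceding bipartite characterization; the only item to state carefully is the connectedness argument that excludes the $rK_2$ summands when $r\geq 1$. I would therefore keep the write-up short: apply the theorem, use connectedness to eliminate $r$, and compute $\gamma_t(K_{1,t})=2$ for the converse.
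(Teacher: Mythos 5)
Your proposal is correct and matches the paper's intent: the corollary is stated without proof precisely because it is the immediate specialization of Theorem 3.1 to connected bipartite graphs, where connectedness forces $r=0$ and leaves only the star $K_{1,t}$. Your explicit verification of $\gamma_t(K_{1,t})=2=n-\Delta+1$ for the converse is the same trivial check the paper implicitly relies on.
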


\section{Total domination numbers
of circular complete graphs}

If $n$ and $d$ are positive integers with $n\geq2d$, then circular
complete graph $K_{n,d}$ is the graph with vertex set $\{v_0,v_1,
\ldots, v_{n-1}\}$ in which $v_i$ is adjacent to $v_j$ if and only
if $d\leq|i-j|\leq n-d$. In this section we determine the total
domination of circular complete graphs. It is easy to see that
$K_{n,1}$ is the complete graph $K_n$ and $K_{n,2}$ is a circle
on $n$ vertices, therefore we assume that $d\geq3$.

\begin{theorem}\label{TK1}
For $n\geq4d-2$ and $d\geq3$, $\gamma_t(K_{n,d})=2$.
\end{theorem}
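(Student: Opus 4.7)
The lower bound $\gamma_t(K_{n,d})\ge 2$ is immediate: no single vertex can totally dominate itself, so any total dominating set has at least two vertices. The entire task is therefore to exhibit an explicit total dominating set of size two, i.e.\ an adjacent pair $\{v_i,v_j\}$ whose neighbourhoods cover $V$.

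The key reformulation I would use is that, in $K_{n,d}$, the non-neighbours of $v_i$ (together with $v_i$ itself) are precisely the vertices at cyclic distance at most $d-1$ from $v_i$; call this the closed ball $B(v_i)$. This is a consecutive arc of exactly $2d-1$ indices. Consequently, $\{v_i,v_j\}$ is a total dominating set if and only if (a) $v_i\sim v_j$, which amounts to cyclic distance between $i$ and $j$ being at least $d$, and (b) $B(v_i)\cap B(v_j)=\emptyset$. Two disjoint arcs of length $2d-1$ fit in a cycle of length $n$ precisely when $n\ge 2(2d-1)=4d-2$, so the numerical hypothesis is exactly tight.

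With this in mind the natural candidate is $S=\{v_0,v_{2d-1}\}$. First, since $n\ge 4d-2\ge 2d$, the cyclic distance from $0$ to $2d-1$ is $\min(2d-1,\,n-2d+1)=2d-1\ge d$, so $v_0\sim v_{2d-1}$. Second, unpacking the definition, the arcs of non-neighbours are $B(v_0)=\{v_{n-d+1},\ldots,v_{n-1},v_0,v_1,\ldots,v_{d-1}\}$ and $B(v_{2d-1})=\{v_d,v_{d+1},\ldots,v_{3d-2}\}$. These are disjoint as soon as $3d-2\le n-d$, i.e.\ $n\ge 4d-2$, which is the hypothesis. Hence every vertex outside $S$ is adjacent to $v_0$ or $v_{2d-1}$, and the two elements of $S$ are adjacent to each other, so $S$ is a total dominating set.

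There is no real obstacle here; the only thing to be careful about is interpreting $d\le |i-j|\le n-d$ as a condition on the cyclic distance (the shorter of $|i-j|$ and $n-|i-j|$), which is what lets one treat $B(v_i)$ uniformly as a symmetric arc of length $2d-1$ and check disjointness by a single index inequality. The hypothesis $d\ge 3$ is not needed for the construction itself; it just rules out the degenerate cases $d=1,2$ in which $K_{n,d}$ is $K_n$ or a cycle and $\gamma_t$ has already been discussed.
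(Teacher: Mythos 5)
Your proposal is correct and uses the same witness set $S=\{v_0,v_{2d-1}\}$ as the paper; the only difference is presentational, in that you verify domination by observing that the non-neighbour arcs $B(v_0)$ and $B(v_{2d-1})$ (each of $2d-1$ consecutive indices) are disjoint exactly when $n\geq 4d-2$, whereas the paper checks the same coverage by direct index inequalities for each block of vertices. Both arguments are essentially identical, and your cyclic-distance reformulation also makes transparent why the bound $n\geq 4d-2$ is tight.
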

\begin{proof}
{Clearly, $\gamma_t(K_{n,d})\geq2$. Let $S=\{v_0, v_{2d-1}\}$. We
will show that $S$ is a total dominating set for $K_{n,d}$. Since
$n\geq4d-2$ and $2d-1\leq 2d$, then $2d-1\leq n-d$. Also $2d-1\geq
d$ since $d\geq3$. Thus $d\leq2d-1\leq n-d$ and $v_0v_{2d-1}\in
E(K_{n,d})$. By definition of $K_{n,d}$, $v_0$ is adjacent to
each of the vertices $v_d,v_{d+1},\ldots, v_{n-d}$.

Now for each $1\leq i\leq d-1$ we have
$$n-d+i-(2d-1)=n-3d+i+1\geq4d-2-3d+i+1\geq d$$
and
$$n-d+i-(2d-1)=n-3d+i+1\leq n-3d+d= n-2d<n-d.$$
Thus $v_{2d-1}$ is adjacent to each of the vertices
$v_{n-d+1},\ldots, v_{n-1}$. On the other hand, for each $1\leq
i\leq d-1$ we have
$$2d-1-i\leq 2d-2\leq3d-2\leq n-d$$
and
$$2d-1-i\geq 2d-1-d+1=d.$$
Hence $v_{2d-1}$ is adjacent to each of the vertices $v_0,
v_1,\ldots, v_{d-1}$ and so $S$ is a total dominating set for
$K_{n,d}$ and  $\gamma_t(K_{n,d})=2$.}
\end{proof}
\begin{theorem}\label{TK2}
For $3d\leq n\leq4d-3$ and $d\geq3$, $\gamma_t(K_{n,d})=3$.
\end{theorem}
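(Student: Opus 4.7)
The plan is to prove the equality by showing $\gamma_t(K_{n,d}) \geq 3$ and $\gamma_t(K_{n,d}) \leq 3$ separately.

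For the lower bound I would first observe that $K_{n,d}$ is regular of degree $n - 2d + 1$: the non-neighbors of any vertex $v_i$, other than itself, are precisely the $2(d-1)$ vertices at index-distance at most $d - 1$ from $i$ (read circularly, since $|i-j| < d$ or $|i-j| > n-d$ together capture exactly these $2(d-1)$ indices). Consequently, for any pair of vertices $\{v_i, v_j\}$,
$$|N(v_i) \cup N(v_j)| \;\leq\; 2(n - 2d + 1) \;=\; 2n - 4d + 2.$$
Under the hypothesis $n \leq 4d - 3$ this quantity is at most $n - 1 < n$, so no pair of vertices can totally dominate $K_{n,d}$, and $\gamma_t(K_{n,d}) \geq 3$.

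For the upper bound I would exhibit the explicit set $S = \{v_0, v_d, v_{2d}\}$. Each of the three pairs inside $S$ satisfies $d \leq |i-j| \leq n - d$, using only $n \geq 3d$, so $S$ induces a triangle and each vertex of $S$ is adjacent to the other two. A direct computation of the three neighborhoods gives
$$N(v_0) = \{v_d, v_{d+1}, \ldots, v_{n-d}\}, \qquad N(v_d) = \{v_0\} \cup \{v_{2d}, \ldots, v_{n-1}\},$$
and
$$N(v_{2d}) = \{v_0, v_1, \ldots, v_d\} \cup \{v_{3d}, \ldots, v_{n-1}\},$$
with the last interval interpreted as empty when $n = 3d$. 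Their union simplifies to $\{v_0, \ldots, v_{n-d}\} \cup \{v_{2d}, \ldots, v_{n-1}\}$, and since $n \geq 3d$ forces $n - d \geq 2d - 1$, the two pieces leave no gap and cover the whole vertex set. Hence $S$ is a total dominating set and $\gamma_t(K_{n,d}) \leq 3$.

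The only real obstacle is keeping the index arithmetic clean. One must remember that $|i-j|$ in the definition of $K_{n,d}$ is the ordinary integer absolute value, so each neighborhood naturally appears as a union of two intervals of indices rather than a single arc; and one must note that the assumption $n \leq 4d - 3$ is used only in the lower bound, while $n \geq 3d$ is used only in the upper bound. Beyond this bookkeeping, nothing conceptual is required.
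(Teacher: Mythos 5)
Your proof is correct, and the lower bound is argued by a genuinely different (and cleaner) route than the paper's. The paper rules out a $2$-element total dominating set by a case analysis: it normalizes the pair to $\{v_0,v_j\}$, uses adjacency of the two vertices and the need to dominate $v_{n-d+1}$ to pin down the possible values of $j$, and then exhibits undominated vertices among $v_2,\dots,v_{d-1}$. You instead observe that $K_{n,d}$ is $(n-2d+1)$-regular, so any two vertices dominate at most $2n-4d+2\leq n-1$ vertices when $n\leq 4d-3$; this global counting argument avoids the index chasing entirely, makes transparent exactly where the hypothesis $n\leq 4d-3$ enters, and is where your write-up improves on the paper (whose stated interval $1\leq j\leq d+1$ does not actually follow from the displayed inequalities, though its conclusion is salvageable). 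For the upper bound both proofs exhibit an explicit $3$-set; yours is $\{v_0,v_d,v_{2d}\}$, which induces a triangle under $n\geq 3d$, whereas the paper's $\{v_0,v_d,v_{2d-1}\}$ induces only a path (note $|d-(2d-1)|=d-1<d$), so your choice makes the ``no isolated vertex in $G[S]$'' check slightly more robust. Your neighborhood computations and the covering condition $n-d\geq 2d-1$ are all verified correctly, so there is no gap.
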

\begin{proof}
{Let $S=\{v_0, v_d, v_{2d-1}\}$. We prove that $S$ is a
$\gamma_t(K_{n,d})$- set. Since $d\leq2d-2\leq n-d$, $G[S]$
contains no isolated vertices. Clearly $v_0$ and $v_d$ are
adjacent to each of the vertices $v_d, v_{d+1},\ldots, v_{n-d}$
and  $v_{2d}, v_{2d+1},\ldots, v_{n-d}$ respectively. For $1\leq
i\leq d-1$ we have
$$2d-1-i\leq 2d-1-d+1=d$$
and
$$2d-1-i\leq 2d-2\leq 2d\leq n-d$$
Thus $v_{2d-1}$ is adjacent to each of the vertices $v_1, v_2,
\ldots, v_{d-1}$. Hence $S$ is a total dominating set for
$K_{n,d}$ and so  $\gamma_t(K_{n,d})\leq3$. Now we prove that
there is no total dominating set for $K_{n,d}$ of size 2. Let
$S'=\{u,v\}$ be a $\gamma_t(K_{n,d})$- set. Without loss of
generality, let $u=v_0$ and $v=v_j$. Clearly $d\leq j\leq n-d$.
Since $v_0v_{n-d+1}\notin E(K_{n,d})$, $d\leq n-d+1-j\leq n-d$
and so $1\leq j\leq d+1$. Thus $j=d$ or $j=d+1$. In both cases,
$S'$ is not a total dominating set since $v_2, v_3,\ldots,
v_{d-1}$ are not dominated by $S'$ a contradiction. This
completes the proof.}
\end{proof}

\end{document}